\newcommand{\doi}[1]{\url{http://dx.doi.org/#1}}
\newcommand{\D}{\Delta}
\renewcommand{\phi}{\varphi}
\newcommand{\R}{{\mathbb R}}
\newcommand{\pde}{\textsc{pde}}
\newcommand{\sde}{\textsc{sde}}
\newtheorem{theorem}{Theorem}
\newtheorem{lemma}[theorem]{Lemma}
\newtheorem{assumption}[theorem]{Assumption}
\theoremstyle{remark}
\title{On the approximation for singularly perturbed stochastic wave equations}
\author{
Wei Wang\thanks{School of Mathematics, University of Adelaide, South Australia~5005, \textsc{Australia}. \protect\url{mailto:
w.wang@adelaide.edu.au}  and Department of Mathematics, Nanjing University, Nanjing, \textsc{China}.   \protect\url{mailto:
wangweinju@yahoo.com.cn}  }
\and
Yan Lv\thanks{School of Science, Nanjing University of Science \&
Technology, Nanjing, 210094, \textsc{China}.
\protect\url{mailto:lvyan1998@yahoo.com.cn} }
\and
A.~J.
Roberts\thanks{School of Mathematics, University of Adelaide, South
Australia~5005, \textsc{Australia}. \protect\url{mailto:
anthony.roberts@adelaide.edu.au}}
}
\date{\today}
\begin{document}

\maketitle

\begin{abstract}
We explore the relation between fast waves, damping and imposed noise for different scalings by considering the singularly perturbed stochastic nonlinear wave equations
\begin{equation*}
 \nu u_{tt}+u_t=\D u+f(u)+\nu^\alpha\dot{W}
 \end{equation*}
on a bounded spatial domain. An asymptotic approximation to the stochastic wave equation is constructed by a special transformation and splitting of~$\nu u_{t}$.  This splitting gives a clear description of the structure of~$u$. The approximating model, for small $\nu>0$\,,  is a stochastic nonlinear heat equation for exponent $0\leq\alpha<1$\,, and is a deterministic nonlinear wave equation for exponent $\alpha>1$\,.
\end{abstract}

\paragraph{Keywords} Singular perturbation, stochastic wave equations, asymptotic approximation.



\section{Introduction}\label{sec:intro}

Our stochastic model is motivated by some material continuum in some domain $D\subset\R^{n}$, $1\leq n\leq 3$\,.  The continuum is made of `particles' with `displacement' field~$u(t,x)$ and `velocity' field~$v(t,x)$.
The motion of the particles in the continuum in a stochastic force field~$\sigma\dot W$,  motivated by Newton's law,  is assumed to be described  by the  following stochastic partial differential  equations~\cite{CF05}
\begin{eqnarray}\label{e:SWE1}
u_{t}^{\nu}(t,x)&=&v^{\nu}(t,x),\\
 \nu v_{t}^{\nu}(t,x)&=&-kv^{\nu}(t,x)+\D u^{\nu}(t,x)+f(u^{\nu}(t,x))+\sigma\dot{W}(t,x),\label{e:SWE2}\\
 u^{\nu}(0,x)&=&u_0\,,\quad
  v^{\nu}(0,x)=u_1 \,,\label{e:SWE3}
\end{eqnarray}
for times $t\geq 0$\,, and locations $x\in D$\,,  with zero Dirichlet boundary condition on~$\partial D$. Here small~$\nu$ is the `density' of the particles: we explore the singular limit as $\nu\to0$ and so label the dependent fields with superscript~$\nu$.   Damping is proportional to the velocity~$v^\nu$ with constant~$k$.  The Laplacian governs near neighbour, particle-particle, quasi-elastic interaction forces, and a nonlinear reaction is characterised by~$f(u)$.  The externally imposed stochastic force field is~$\sigma \dot{W}(t,x)$ where $W(t,x)$~is an $L^{2}(D)$~valued Wiener process defined on some complete probability space~$(\Omega, \mathcal{F}, \mathbb{P})$, and is assumed to be of strength that scales according to $\sigma=\nu^\alpha$.   For exponent $\alpha=0$\,, the approximation of displacements~$u^{\nu}$, as $\nu\rightarrow 0$\,, is called the  infinite dimensional Smolukowski--Kramers approximation which has been proved valid in the limit by estimating the remainder term~\cite{CF05, CF06}. Our recent work~\cite{LvRob11} applied an averaging method to approximate the displacement field~$u^{\nu}$ for the case $\sigma=\nu^{\alpha}$ with exponent $0\leq \alpha\leq 1/2$\,.   Both of these methods are significantly complicated due to the coupling of displacement~$u$ and velocity~$v$ in the remainder term.

Here we apply a relatively simple method to derive suitable approximations for equations~(\ref{e:SWE1})--(\ref{e:SWE3}) with $\sigma=\nu^{\alpha}$, for exponent $\alpha\in [0,1)\cup (1, \infty)$.  We apply the following  useful  splitting  of the velocity~$u^{\nu}_{t}$,
\begin{equation}
u_{t}^{\nu}(t)=\frac{1}{\nu}\bar{v}_{1}^{\nu}(t)+v_{2}^{\nu}(t)+\nu^{\alpha-1/2}\bar{v}_{3}^{\nu}(t),
\end{equation}
to avoid directly estimating  the remainder  term.  The three parts of the above splitting are the initial value part, the mean value part, and the diffusion part, respectively. This decomposition gives a clear structure for the displacement~$u^{\nu}$ with
\begin{equation*}
u^{\nu}(t)-u_{0}=\frac{1}{\nu}\int_{0}^{t}\bar{v}^{\nu}_{1}(s)\,ds+\int_{0}^{t}\bar{v}_{2}^{\nu}(s)\,ds+\nu^{\alpha-1/2}\int_{0}^{t}\bar{v}_{3}^{\nu}(s)\,ds\,.
\end{equation*}
 The parts~$\bar{v}_{1}^{\nu}$ and~$\bar{v}_{3}^{\nu}$ satisfy linear equations, and section~\ref{sec:App} establishes
\begin{equation*}
\frac{1}{\nu}\int_{0}^{t}\bar{v}^{\nu}_{1}(s)\,ds=\mathcal{O}(\nu)\quad\text{and}\quad
\nu^{\alpha-1/2}\int_{0}^{t}\bar{v}_{3}^{\nu}\,ds=\mathcal{O}(\nu^{\alpha})
\quad\text{as }\nu\rightarrow 0\,.
\end{equation*}
The mean part $\bar{v}_{2}^{\nu}(t)$ of the velocity is~$\mathcal{O}(1)$ as $\nu\rightarrow 0$ for $t\in [0, T]$ with any fixed time~$T$.
Then for small~$\nu$, section~\ref{sec:App} determines which term is a high order term and gives an asymptotic approximation of the displacement~$u^{\nu}$.

Here one interesting case is when the exponent $\alpha=1$\,. In this case there are two terms with the same order~$\mathcal{O}(\nu)$  as $\nu\rightarrow 0$\,. Then if we keep all the $\mathcal{O}(\nu)$~terms, the approximation to the displacement~$u^{\nu}$ is just itself which is no modelling simplification. This case will be discussed further research.

Because of its motivation by physical continuum problems of wave motion in some random media~\cite{Chow81},
the system (\ref{e:SWE1})--(\ref{e:SWE3}) is called a stochastic wave equation. For small~$\nu$ and the particular case of $\sigma=\nu^{1/2}$\,,  Lv and Wang~\cite{LW08, WL10} studied the limit behaviour as $\nu\rightarrow 0$: in this case the random dynamics  of~(\ref{e:SWE1})--(\ref{e:SWE3}) was proved to be described by that of the nonlinear heat equation
\begin{equation}
u_{t}(t,x)=\D u( t,x)+f(u(t,x)),\quad u(0,x)=u_{0}\,.
\end{equation}
This paper extends this earlier research by approximating the behaviour of the solution on finite time interval~$[0, T]$, $T>0$\,,   for the more general case of $\sigma=\nu^{\alpha}$ with any $\alpha\in [0,1)\cup (1, \infty)$. The tightness in the space~$C(0, T; L^{2}(D))$, compact in sense of probability, has been proved in previous work~\cite{CF06, WL10}. Consequently, here we just need to approximate the displacement~$u^{\nu}$ in a weak sense; that is, we  consider the approximation of the inner product~$\langle u^{\nu}, \phi\rangle$ in the space~$C(0, T)$ for testing function $\phi\in C^{2}(D\times[0, T])$ with $\phi$~vanishing on the boundary~$\partial D$.

Section~\ref{sec:Pre} first gives some preliminaries and the main result, Theorem~\ref{thm:main}. Then section~\ref{sec:App} details the proof.


\renewcommand{\Gamma}{\partial D}

\section{Preliminary}\label{sec:Pre}
Let $D\subset \R^n$, $1\leq n\leq 3$\,, be a regular domain with
boundary~$\Gamma$. Denote by~$L^2(D)$ the Lebesgue space of square
integrable real valued functions on~$D$, which is a Hilbert space
with inner product
\begin{equation*}
\langle u, v\rangle=\int_Du(x)v(x)\,dx\,, \quad u, v\in L^2(D)\,.
\end{equation*}
Write the norm on $L^2(D)$ by $\|u\|_0=\langle u,
u\rangle^{1/2}$. Define the following abstract operator
\begin{equation*}
Au=-\D u\,,\quad u\in \operatorname{Dom}(A)=\{u\in L^2(D): \D u\in L^2(D)\,,
\ u|_{\Gamma}=0\}.
\end{equation*}
Denoted by $\{\lambda_k\}$, assume the eigenvalues of operator~$A$ satisfy
$0<\lambda_1\leq \lambda_2\leq \lambda_3\leq \cdots$\,, and
$\lambda_k\rightarrow\infty$ as $k\rightarrow\infty$\,. For any
$s\geq 0$\,, denote by $H^{s}(D)$ the usual Sobolev space~$W^{s, 2}(D)$ and   by~$H^{s}_0(D)$ the closure of~$C_{0}^{\infty}(D)$ in~$H^{s}(D)$.  In the space~$H_{0}^{s}(D)$ we use the equivalent norm
\begin{equation*}
\|u\|_s=\|A^{s/2}u\|_0\,, \quad u\in H^s_0(D).
\end{equation*}
We also denote the dual space of~$H_{0}^{s}$ by $H^{-s}$.
Here specify that the noise magnitude scales as $\sigma=\nu^{\alpha}$, $0<\nu\leq 1$\,, for exponent $\alpha\geq 0$\,, in equation~(\ref{e:SWE2}); that is, we consider the following stochastic equations
\begin{eqnarray}\label{e:abs-SWE1}
u^\nu_t&=&v^\nu, \quad u^\nu(0)=u_0\,,
\\ v^\nu_t&=&\frac{1}{\nu}[-v^\nu-Au^\nu+f(u^\nu)]
+\nu^{\alpha-1} \dot{W}\,, \quad  u^\nu_t(0)=u_1\,.\label{e:abs-SWE2}
\end{eqnarray}
Hereafter we non-dimensionalise the time scale with the drag rate so that, in effect, the drag coefficient is one.  We assume
 $\{W(t,x)\}_{t\in \R}$ is an $L^2(D)$-valued,
two sided, Wiener process, defined on a complete probability space
$(\Omega, \mathcal{F}, \{\mathcal{F}_t\}_{t\geq 0},
\mathbb{P})$ with covariance operator~$Q$ such that
\begin{equation*}
Qe_k=b_k e_k\,, \quad  k=1,2,\ldots\,,
\end{equation*}
where $\{e_k\}$ is a complete orthonormal system in~$H$ and $\{b_k\}$~is a bounded sequence of non-negative real numbers. Then the noise process~$W(t,x)$ has the spectral expansion
\begin{equation*}
W(t,x)=\sum_{k=1}^\infty \sqrt{b_k} e_k w_k(t),
\end{equation*}
where $w_k$ are real, mutually independent, standard scalar Brownian
motions~\cite{PZ92}. Further, we assume boundedness of the sums
\begin{equation}\label{Q}
 \operatorname{tr}Q=\sum^\infty_{k=1} b_k<\infty \quad\text{and}\quad
\sum^\infty_{k=1}\lambda_k b_k<\infty\,.
\end{equation}
\begin{assumption}\label{H}
For the nonlinearity~$f$ we assume
\begin{enumerate}
    \item $|f(s)|\leq C_1(1+|s|^3)$,  $|f'(s)|\leq C_2(1+|s|^2)$;
    \item   $F(s)\leq -C_3(|s|^4-1)$,
     $sf(s)\leq -C_4(F(s)-1)$;
\end{enumerate}
 for some positive constants~$C_i$, $i=1,2,3,4$,  and where  $F(s)=\int_0^sf(r)\,dr$\,.
\end{assumption}
One simple example satisfying these assumptions is $f(u)=u-u^3$.

Then we have the following theorem.
\begin{theorem}\label{thm:existence}
Assume that the boundedness~(\ref{Q}) and Assumption~\ref{H} hold. For any $(u_0, u_1)\in
H_0^1(D)\times L^2(D)$\,, there is a unique solution $(u^\nu, v^{\nu})$ to~(\ref{e:abs-SWE1})--(\ref{e:abs-SWE2}), with
\begin{equation}\label{sol}
u^\nu\in L^2(\Omega, C(0, T; H^1_0(D))) \quad\text{and}\quad
  v^\nu\in L^2(\Omega, C(0, T;L^2(D))),
\end{equation}
for any $T>0$\,. Moreover, for any $T>0$ there is a positive constant~$C_T$ which is independent of~$\nu$ such that the expectation
\begin{equation*}
\mathbb{E}\sup_{0\leq t\leq T}\|u^\nu(t)\|_1 \leq C_T(\|u_0\|_{1}^2+\|u_1\|_{0}^2)\,,
\end{equation*}
and $\{u^\nu\}_{0<\nu\leq 1}$ is tight in the space~$C(0, T; L^2(D))$\,.
\end{theorem}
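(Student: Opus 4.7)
The plan is a classical Faedo--Galerkin construction combined with an energy identity tailored to the singularly perturbed system. Fix the orthonormal basis~$\{e_k\}$ of eigenfunctions of~$A$, let $P_N$ project onto $\mathrm{span}\{e_1,\dots,e_N\}$, and consider the Galerkin truncations $(u^{\nu,N},v^{\nu,N})$ of~(\ref{e:abs-SWE1})--(\ref{e:abs-SWE2}). Assumption~\ref{H}(1) makes the projected nonlinearity $P_N f(u^{\nu,N})$ locally Lipschitz with polynomial growth, so standard SDE theory gives a unique local strong solution in~$\mathbb R^{2N}$; the \emph{a priori} bound below extends this to a global solution on~$[0,T]$.

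The core of the argument is It\^o's formula applied to the Lyapunov functional
\begin{equation*}
\mathcal E^\nu(t)=\frac{\nu}{2}\|v^\nu(t)\|_0^2+\frac{1}{2}\|u^\nu(t)\|_1^2-\int_D F(u^\nu(t,x))\,dx,
\end{equation*}
which by Assumption~\ref{H}(2) dominates $\tfrac12\|u^\nu\|_1^2+C_3\|u^\nu\|_{L^4(D)}^4$ up to a deterministic constant. The cross terms $\langle Au^\nu,v^\nu\rangle$ and $\langle f(u^\nu),v^\nu\rangle$ cancel between the $\tfrac12\|u^\nu\|_1^2$ and $\tfrac{\nu}{2}\|v^\nu\|_0^2$ contributions, producing the closed energy identity
\begin{equation*}
\mathcal E^\nu(t)=\mathcal E^\nu(0)-\int_0^t\|v^\nu(s)\|_0^2\,ds+\tfrac{1}{2}\nu^{2\alpha-1}t\operatorname{tr}Q+\int_0^t\nu^\alpha\langle v^\nu(s), dW(s)\rangle.
\end{equation*}
Taking the supremum in~$t\in[0,T]$, bounding the stochastic integral by the Burkholder--Davis--Gundy inequality, and invoking a Young-type inequality to absorb the resulting contribution into the dissipation $\int_0^T\|v^\nu\|_0^2\,ds$, one derives the bound on $\mathbb E\sup_{[0,T]}\|u^\nu\|_1^2$ claimed in the theorem, uniformly in~$N$ and~$\nu\in(0,1]$. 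Passing to a subsequential weak-$*$ limit in $L^2(\Omega;L^\infty(0,T;H^1_0(D)))$, with the polynomial growth of~$f$ providing the strong compactness needed to handle the nonlinearity, produces the solution with the regularity in~(\ref{sol}).

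Pathwise uniqueness comes from applying the same energy identity to the difference of two solutions: additive noise drops out under subtraction, and the residual nonlinearity $f(u^\nu_1)-f(u^\nu_2)$ is controlled by the polynomial bound on~$f'$ from Assumption~\ref{H}(1) together with the embedding $H^1_0(D)\hookrightarrow L^6(D)$ valid for $n\leq 3$, closing with Gronwall. Tightness in~$C(0,T;L^2(D))$ is obtained from the Simon--Aubin compactness criterion: the bound just derived gives control of~$u^\nu$ in~$C([0,T];H^1_0(D))$, while the representation $u^\nu(t)-u^\nu(s)=\int_s^t v^\nu(r)\,dr$ combined with the $L^2$-in-time bound on~$v^\nu$ from the energy identity supplies a uniform modulus-of-continuity estimate in~$L^2(D)$; a Chebyshev argument then gives tightness of the laws.

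The main technical obstacle is keeping every constant independent of~$\nu$. The It\^o correction $\tfrac12\nu^{2\alpha-1}\operatorname{tr}Q$ and the BDG bound on the stochastic integral both carry $\nu$-dependent prefactors, and the argument works only because these contributions can be balanced against the dissipation $-\|v^\nu\|_0^2\,dt$ already present in the energy identity. Once this balance is set up correctly, the remainder is a routine Galerkin--energy--compactness argument.
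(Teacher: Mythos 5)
Your construction (Galerkin truncation, It\^o's formula on the energy functional $\mathcal E^\nu$, BDG, Aubin--Simon) is the standard route, and the energy identity you write down is correct: the cross terms $\langle Au^\nu,v^\nu\rangle$ and $\langle f(u^\nu),v^\nu\rangle$ do cancel, leaving $d\mathcal E^\nu=-\|v^\nu\|_0^2\,dt+\nu^\alpha\langle v^\nu,dW\rangle+\tfrac12\nu^{2\alpha-1}\operatorname{tr}Q\,dt$. For comparison, the paper does not prove this theorem in any detail: it rewrites the system as an abstract evolution equation, invokes Da Prato--Zabczyk for local well-posedness, and then cites Cerrai--Freidlin for the uniform estimate and tightness when $0\leq\alpha<1/2$ and Lv--Wang when $\alpha\geq1/2$. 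Your argument is essentially a reconstruction of what those references do in the easier regime $\alpha\geq1/2$, where $\nu^{2\alpha-1}\leq1$ and everything you say goes through.

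The gap is in the regime $0\leq\alpha<1/2$, and it is exactly the point the case split in the citations is hiding. You claim the It\^o correction $\tfrac12\nu^{2\alpha-1}t\operatorname{tr}Q$ can be ``balanced against the dissipation $\int_0^t\|v^\nu\|_0^2\,ds$ by a Young-type inequality.'' Young's inequality can absorb the BDG bound on the martingale term, because that bound is of the form $C\nu^\alpha\bigl(\mathbb E\int_0^T\|v^\nu\|_0^2\,ds\bigr)^{1/2}$ and carries a factor that pairs against the dissipation. The It\^o correction does not: it is a deterministic constant with no dependence on $v^\nu$, and for $\alpha<1/2$ it diverges like $\nu^{2\alpha-1}\to\infty$. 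To cancel it one must show that $\int_0^t\|v^\nu\|_0^2\,ds$ itself grows like $\tfrac12\nu^{2\alpha-1}t\operatorname{tr}Q$ up to bounded corrections, which requires isolating the Ornstein--Uhlenbeck (stochastic convolution) part of $v^\nu$ --- precisely the decomposition $\tilde v^\nu=\bar v_1^\nu+\nu\bar v_2^\nu+\nu^{\alpha+1/2}\bar v_3^\nu$ that the paper introduces in Section~3, and the reason the introduction describes the Cerrai--Freidlin estimates as ``significantly complicated.'' The same defect propagates into your tightness argument: the modulus of continuity $\|u^\nu(t)-u^\nu(s)\|_0\leq\sqrt{t-s}\,\|v^\nu\|_{L^2(0,T;L^2(D))}$ is useless for $\alpha<1/2$ because $\mathbb E\int_0^T\|v^\nu\|_0^2\,ds$ is of order $\nu^{2\alpha-1}$ and blows up; one instead has to estimate the increment of the stochastic convolution contribution separately. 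Uniqueness and the case $\alpha\geq1/2$ are fine as you present them, but as written the proposal does not establish the $\nu$-uniform $H^1_0$ bound or tightness for $0\leq\alpha<1/2$.
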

\begin{proof}
To prove the existence of the solution we define
\begin{equation*}
\mathcal{A}=\begin{bmatrix}
    0, &  \operatorname{id}_{L^2({D})}  \\
    \frac{1}{\nu}\D, &  -\frac{1}{\nu}\\
\end{bmatrix} ,\quad  F(u^\nu, v^\nu)=
\begin{bmatrix}
    0   \\
    \frac{1}{\nu}f(u^\nu)  \\
\end{bmatrix}
\quad\text{and}\quad
\mathcal{W}(t)=
\begin{bmatrix}
    0    \\
\frac{\nu^{\alpha-1/2}}{\sqrt{\nu}}W(t)
\end{bmatrix} .
\end{equation*}
Let $\Phi=(u^\nu, v^\nu)$, so equation (\ref{e:abs-SWE1})--(\ref{e:abs-SWE2}) can be rewritten in the following abstract stochastic evolutionary form
\begin{equation}\label{e:ABFSWE}
\dot{\Phi}=\mathcal{A}\Phi+F(\Phi)
+\dot{\mathcal{W}}\,, \quad \Phi(0)=(u_0, u_1).
\end{equation}
Notice that operator~$\mathcal{A}$ generates a strong continuous semigroup and the nonlinearity~$F$ is locally Lipschitz continuous, then by a standard method for stochastic evolutionary equations~\cite{PZ92} we have the first part of the theorem.

For $0\leq\alpha<1/2$ the energy  estimate for~($u^\nu$, $v^\nu$) and tightness result can be  obtained via a similar argument to that of Cerrai and Freidlin~\cite{CF06}, and for $\alpha\geq 1/2$ the energy estimate and tightness were obtained by Lv and Wang~\cite{LW08}. The proof is complete.
\end{proof}

In the following approach we need the following lemma on weak convergence of a sequence of functions due to Lions~\cite{Lions}.
\begin{lemma}\label{lem:convergence}
For any given functions~$h_{\nu}$ and $h\in L^{p}([0, T]\times D)$ $(1<p<\infty)$, if
$\|h_{\nu}\|_{L^{p}([0, T]\times D)}\leq C$
for some positive constant~$C$, and $h_{\nu}\rightarrow h$  on $[0, T]\times D$ almost everywhere as $\nu\rightarrow 0$\,, then $h_{\nu}\rightarrow h$ weakly in $L^{p}([0, T]\times D)$.
\end{lemma}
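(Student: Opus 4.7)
The plan is to establish weak convergence directly, by proving that $\int_{[0,T]\times D}(h_\nu - h)\phi\,dt\,dx \to 0$ for every test function $\phi$ in the dual space $L^{p'}([0,T]\times D)$, where $p'=p/(p-1)$. Two ingredients will do the work: the finiteness of the Lebesgue measure of $[0,T]\times D$ (so that Egorov's theorem applies), and the uniform $L^p$ bound $\|h_\nu\|_{L^p}\leq C$, which together with $h\in L^p$ gives the equi-integrability needed to control any exceptional set via H\"older's inequality. Reflexivity of $L^p$ for $1<p<\infty$ is what underlies this and prevents the pathologies one sees at the endpoint exponents.

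More concretely, I would fix $\phi\in L^{p'}([0,T]\times D)$ and, for an arbitrary $\delta>0$, invoke Egorov's theorem to produce a measurable set $E_\delta\subset[0,T]\times D$ with $|([0,T]\times D)\setminus E_\delta|<\delta$ on which $h_\nu\to h$ uniformly. Splitting
\begin{equation*}
\int_{[0,T]\times D}(h_\nu - h)\phi\,dt\,dx = \int_{E_\delta}(h_\nu-h)\phi\,dt\,dx + \int_{E_\delta^c}(h_\nu-h)\phi\,dt\,dx,
\end{equation*}
the first term tends to zero as $\nu\to 0$ by the uniform convergence on $E_\delta$ and the finiteness of $\int_{E_\delta}|\phi|\,dt\,dx$. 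For the second term, H\"older's inequality together with the uniform bound and $h\in L^p$ gives
\begin{equation*}
\left|\int_{E_\delta^c}(h_\nu-h)\phi\,dt\,dx\right|\leq \bigl(\|h_\nu\|_{L^p}+\|h\|_{L^p}\bigr)\|\phi\|_{L^{p'}(E_\delta^c)} \leq \bigl(C+\|h\|_{L^p}\bigr)\|\phi\|_{L^{p'}(E_\delta^c)}.
\end{equation*}
By absolute continuity of the integral, $\|\phi\|_{L^{p'}(E_\delta^c)}\to 0$ as $\delta\to 0$. Taking $\limsup_{\nu\to 0}$ first and then letting $\delta\to 0$ then closes the argument.

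The main obstacle is genuinely the exceptional set $E_\delta^c$: pointwise almost-everywhere convergence alone is consistent with arbitrarily large oscillations of $h_\nu$ on small sets (for instance, spikes concentrating on a shrinking region), so without uniform $L^p$ control we could not bound the second integral at all. The interplay between Egorov (set size versus speed of convergence) and H\"older with the uniform bound (exceptional integral versus set size) is exactly what resolves this. Because the argument works for the full sequence and any $\phi\in L^{p'}$, no subsequence extraction via Banach--Alaoglu is needed, and the conclusion follows in one pass.
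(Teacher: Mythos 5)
Your argument is correct. Note, though, that the paper does not prove this lemma at all: it is quoted as a known result of Lions \cite{Lions}, so there is no in-paper proof to compare against. The classical route (and the one usually attributed to Lions) exploits reflexivity: extract a weakly convergent subsequence by boundedness in $L^p$, identify its weak limit with the a.e.\ limit $h$ (e.g.\ via Egorov or Mazur), and conclude for the whole family because every subsequence has the same weak limit. Your proof is a direct, one-pass alternative: Egorov on the finite-measure set $[0,T]\times D$ handles the good set, and H\"older plus the uniform bound handles the exceptional set, with absolute continuity of $\int|\phi|^{p'}$ closing the gap. This buys you a self-contained argument with no subsequence extraction and no appeal to Banach--Alaoglu. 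One small imprecision: what your argument actually uses about $1<p<\infty$ is not reflexivity but the finiteness of the dual exponent $p'$, so that $\|\phi\|_{L^{p'}(E_\delta^c)}\to 0$ as $|E_\delta^c|\to 0$; this is exactly what fails at $p=1$ (where $p'=\infty$ and mass can concentrate). Also, Egorov is a statement about sequences, so strictly you should run the argument along an arbitrary sequence $\nu_n\to 0$; since the conclusion is the same for every such sequence, the convergence as $\nu\to 0$ follows. Neither point affects the validity of the proof.
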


Now we give the  main theorem on the approximation of the displacement~$u^{\nu}$ in our stochastic wave equation.
\begin{theorem}\label{thm:main}
Assume that the boundedness~(\ref{Q}) and Assumption~\ref{H} hold, and $(u_{0}, u_{1})\in H_{0}^{1}(D)\times L^{2}(D)$\,.  If exponent $0\leq \alpha<1$\,,  for any $T>0$\,,  and for small $\nu>0$\,,  then with probability one
\begin{equation*}
\nu^{-\alpha}\|u^{\nu}-\bar{u}^{\nu}\|_{C(0, T; L^{2}(D))}\rightarrow 0\,,\quad \text{as } \nu\rightarrow 0 \,,
\end{equation*}
  with  the approximation~$\bar{u}^{\nu}$  solving the stochastic  nonlinear heat equation
\begin{equation}
\bar{u}^{\nu}_{t}=\D \bar{u}^{\nu}+f(\bar{u}^{\nu})+\nu^{\alpha}\dot{W}\,,\quad \bar{u}^{\nu}(0)=u_{0}\,.
\end{equation}
Conversely, if $\alpha>1$\,,
\begin{equation*}
\nu^{-1}\|u^{\nu}-\bar{u}^{\nu}\|_{C(0, T; L^{2}(D))}\rightarrow 0\,,\quad \text{as } \nu\rightarrow 0\,,
\end{equation*}
with the approximation~$\bar{u}^{\nu}$ solving  the deterministic nonlinear wave equation
\begin{equation}
\nu\bar{u}_{tt}+\bar{u}^{\nu}_{t}=\D \bar{u}^{\nu}+f(\bar{u}^{\nu})\,, \quad \bar{u}^{\nu}(0)=u_{0}\,, \ \bar{u}_{t}^{\nu}(0)=u_{1}\,.
\end{equation}

\end{theorem}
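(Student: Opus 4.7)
The plan is to make the splitting $u^\nu_t = \tfrac{1}{\nu}\bar{v}_1^\nu + v_2^\nu + \nu^{\alpha-1/2}\bar{v}_3^\nu$ announced in the introduction explicit, integrate to obtain an integral equation for $u^\nu$ of the same shape as the mild form of $\bar{u}^\nu$, and then close by a Gronwall-type comparison. Concretely, I would take $\bar{v}_1^\nu(t):=\nu u_1 e^{-t/\nu}$ (so that $\nu\dot{\bar{v}}_1^\nu + \bar{v}_1^\nu = 0$ with $\bar{v}_1^\nu(0)=\nu u_1$), let $\bar{v}_3^\nu$ solve the Ornstein--Uhlenbeck equation $\nu\,d\bar{v}_3^\nu = -\bar{v}_3^\nu\,dt + \sqrt{\nu}\,dW$ with $\bar{v}_3^\nu(0)=0$, and define $v_2^\nu := u_t^\nu - \tfrac{1}{\nu}\bar{v}_1^\nu - \nu^{\alpha-1/2}\bar{v}_3^\nu$. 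The scaling exponent $\alpha-1/2$ is chosen precisely so that $\sqrt{\nu}\cdot\nu^{\alpha-1/2}=\nu^\alpha$ absorbs the original white noise in~(\ref{e:abs-SWE2}); substituting the splitting into~(\ref{e:abs-SWE1})--(\ref{e:abs-SWE2}) then yields the equation for the mean part
\begin{equation*}
\nu \dot{v}_2^\nu + v_2^\nu = \Delta u^\nu + f(u^\nu),\qquad v_2^\nu(0)=0.
\end{equation*}

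Next I would estimate each integrated piece. The initial-value contribution is elementary: $\tfrac{1}{\nu}\int_0^t \bar{v}_1^\nu\,ds = \nu u_1 (1-e^{-t/\nu}) = \mathcal{O}(\nu)$ in $C(0,T;L^2(D))$. A stochastic Fubini on the Ornstein--Uhlenbeck process gives
\begin{equation*}
\nu^{\alpha-1/2}\int_0^t \bar{v}_3^\nu(s)\,ds = \nu^\alpha W(t) - \nu^\alpha \int_0^t e^{-(t-s)/\nu}\,dW(s),
\end{equation*}
the second term being $\mathcal{O}(\nu^{\alpha+1/2})$ in $C(0,T;L^2(D))$ almost surely by the Da~Prato--Zabczyk factorisation method applied under the trace condition~(\ref{Q}). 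Integrating the $v_2^\nu$ equation converts $\dot{v}_2^\nu$ into a boundary term,
\begin{equation*}
\int_0^t v_2^\nu(s)\,ds = \int_0^t \bigl[\Delta u^\nu + f(u^\nu)\bigr]\,ds - \nu v_2^\nu(t),
\end{equation*}
with $\nu\|v_2^\nu(t)\|_0=\mathcal{O}(\nu)$ almost surely by Theorem~\ref{thm:existence}. Adding the three pieces through $u^\nu(t)-u_0=\int_0^t u^\nu_s\,ds$ produces the integral identity
\begin{equation*}
u^\nu(t) = u_0 + \int_0^t \bigl[\Delta u^\nu + f(u^\nu)\bigr]\,ds + \nu^\alpha W(t) + E^\nu(t),
\end{equation*}
with $\|E^\nu\|_{C(0,T;L^2(D))} \leq \mathcal{O}(\nu) + \mathcal{O}(\nu^{\alpha+1/2})$ almost surely.

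For $0\leq\alpha<1$, subtracting the mild form $\bar{u}^\nu(t) = u_0 + \int_0^t (\Delta\bar{u}^\nu+f(\bar{u}^\nu))\,ds + \nu^\alpha W(t)$ eliminates the common noise, and dividing by $\nu^\alpha$ gives an integral equation for $\nu^{-\alpha}(u^\nu-\bar{u}^\nu)$ whose source $\nu^{-\alpha}E^\nu = \mathcal{O}(\nu^{1-\alpha})+\mathcal{O}(\nu^{1/2})$ tends to zero. A Gronwall argument, using the local Lipschitz bound $|f(u)-f(\bar{u})|\leq C(1+|u|^2+|\bar{u}|^2)|u-\bar{u}|$ together with the pathwise $H^1$ bounds of Theorem~\ref{thm:existence} for both $u^\nu$ and $\bar{u}^\nu$ and the embedding $H^1\hookrightarrow L^6$ (valid for $n\leq 3$), then closes the argument. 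For $\alpha>1$, the deterministic $\bar{u}^\nu$ admits the same splitting but with $\bar{v}_3^\nu$ absent, so that $u^\nu-\bar{u}^\nu = \int_0^t (v_2^\nu-\bar{v}_2^\nu)\,ds + \nu^{\alpha-1/2}\int_0^t \bar{v}_3^\nu\,ds$; the noise term is $\mathcal{O}(\nu^\alpha)=o(\nu)$ by the bound above, and the $v_2^\nu$-difference closes by the analogous Gronwall step.

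The main obstacle I anticipate is closing the Gronwall step pathwise with the only-locally-Lipschitz cubic $f$: the effective Lipschitz constant depends on the random $H^1$-norms of $u^\nu$ and $\bar{u}^\nu$, so a stopping-time (or truncation-of-$\Omega$) argument is needed to make it $\omega$-uniform. Upgrading the stochastic-convolution and $v_2^\nu$ moment bounds to pathwise $C(0,T;L^2(D))$-supremum bounds of the stated orders is standard via BDG and the factorisation method under~(\ref{Q}); and Lemma~\ref{lem:convergence} is available should a compactness-and-identification strategy be preferred for passing to the limit in $f(u^\nu)$.
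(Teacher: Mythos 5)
Your splitting is exactly the paper's: writing $\tilde v^\nu=\nu v^\nu$ and $\tilde v^\nu=\bar v_1^\nu+\nu\bar v_2^\nu+\nu^{\alpha+1/2}\bar v_3^\nu$ reproduces your three pieces, and your treatment of $\bar v_1^\nu$ and of the Ornstein--Uhlenbeck part $\bar v_3^\nu$ (integration by parts producing $\nu^\alpha W(t)$ plus an $\mathcal{O}(\nu^{\alpha+1/2})$ stochastic convolution) is precisely the content of Lemma~\ref{lem:v1v3}. Where you diverge is that you try to run the whole comparison in the strong norm of $C(0,T;L^2(D))$ and close with a Gronwall argument, whereas the paper works weakly: it pairs everything with $\phi\in C^2([0,T]\times D)$, $\phi|_{\Gamma}=0$, restricts to a set $\Omega_\kappa$ of nearly full measure, and uses tightness together with Lions' lemma (Lemma~\ref{lem:convergence}) to pass to the limit in $f(u^\nu)$.

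The gap is your claim that $\nu\|v_2^\nu(t)\|_0=\mathcal{O}(\nu)$ ``by Theorem~\ref{thm:existence}''. The only uniform-in-$\nu$ control available on the mean part is in $H^{-1}$: from $v_2^\nu(t)=\frac1\nu\int_0^te^{-(t-s)/\nu}[\Delta u^\nu(s)+f(u^\nu(s))]\,ds$ and $\mathbb{E}\sup_t\|u^\nu\|_1^2\le C_T$ one only gets $\mathbb{E}\|v_2^\nu(t)\|_{-1}\le C_T$ (the paper's Lemma~\ref{lem:v2}), because $\Delta u^\nu$ lives only in $H^{-1}$; an $L^2(D)$ bound on $v_2^\nu$ would require a uniform $H^2$ bound on $u^\nu$, which is not available. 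Equivalently, $\nu v_2^\nu(t)=\nu v^\nu(t)-\bar v_1^\nu(t)-\nu^{\alpha+1/2}\bar v_3^\nu(t)$, and the energy estimate for the wave equation gives only $\sqrt\nu\,\|v^\nu(t)\|_0\le C$, so in $L^2(D)$ this boundary term is a priori only $\mathcal{O}(\sqrt\nu)$ --- not $o(\nu^\alpha)$ once $\alpha\ge 1/2$, and certainly not $\mathcal{O}(\nu)$. This is exactly the obstruction that forces the paper into the weak formulation: tested against a fixed $\phi$, the term $\nu\langle v_2^\nu(t),\phi(t)\rangle$ is $\mathcal{O}(\nu)$ by the $H^{-1}$ bound, and the Laplacian is thrown onto $\phi$. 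Your strong-norm integral identity for $u^\nu$, and the Gronwall step built on it, therefore do not follow from the stated estimates. (A secondary difficulty in the same step: your error $E^\nu$ contains $\nu v_2^\nu(t)$, which is not differentiable with a small derivative, so closing Gronwall for $w=u^\nu-\bar u^\nu$ in the presence of $\int_0^t\Delta w\,ds$ needs either an energy pairing --- which reintroduces $\nu\dot v_2^\nu=\mathcal{O}(1)$ --- or a mild formulation in which $\Delta e^{(t-s)\Delta}$ acts on an error controlled only in $L^2$.) To repair the argument you must either establish a uniform $L^2$ bound on $v_2^\nu$ (extra regularity hypotheses), or retreat to the weak/tightness route the paper takes.
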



\section{Approximation: proof of Theorem~\ref{thm:main}}\label{sec:App}
By Theorem~\ref{thm:existence},  $\{u^{\nu}(t)\}_{0<\nu\leq 1}$~is tight in the space~$C(0, T; L^{2}(D))$, so we  approximate the displacement~$u^{\nu}$ in a weak sense:  we approximate~$\langle u^{\nu}, \phi\rangle $ for any $\phi\in C^{2}([0, T]\times D)$ with~$\phi|_{\Gamma}=0$\,.

In order to avoid the coupling between the displacement~$u^{\nu}$ and the velocity~$v^{\nu}$,  we scale the velocity field as
\begin{equation}\label{e:transf}
\tilde{v}^{\nu}=\nu v^{\nu}.
\end{equation}
Then
\begin{eqnarray*}
u^{\nu}_{t}&=&\frac{1}{\nu}\tilde{v}^{\nu},\quad u^{\nu}(0)=u_{0}\,,\\
\tilde{v}^{\nu}_{t}&=&-\frac{1}{\nu}\tilde{v}+\D u^{\nu}+f(u^{\nu})+\nu^{\alpha}\dot{W}\,,\quad \tilde{v}^{\nu}(0)=\nu u_{1}\,.
\end{eqnarray*}
Further, we make the decomposition
\begin{eqnarray}\label{e:split}
\tilde{v}^{\nu}&=&\bar{v}^{\nu}_{1}+\nu\bar{v}_{2}^{\nu}+\nu^{\alpha+1/2}\bar{v}_{3}^{\nu} \,,
\\
\text{where}\quad
\bar{v}_{1,t}^{\nu}&=&-\frac{1}{\nu}\bar{v}_{1}^{\nu}\,,\quad \bar{v}_{1}^{\nu}(0)=\nu u_{1}\,,\label{e:v1}\\
\bar{v}_{2,t}^{\nu}&=&-\frac{1}{\nu}[\bar{v}^{\nu}_{2}-\D u^{\nu}-f(u^{\nu})]\,,\quad \bar{v}_{2}^{\nu}(0)=0\,,\label{e:v2}\\
\bar{v}_{3,t}^{\nu}&=&-\frac{1}{\nu}\bar{v}^{\nu}_{3}+\frac{1}{\sqrt{\nu}}\dot{W}\,,\quad \bar{v}_{3}^{\nu}(0)=0\,.\label{e:v3}
\end{eqnarray}
Then
\begin{equation}\label{e:tilde-u}
u_{t}^{\nu}=\frac{1}{\nu}\bar{v}_{1}^{\nu}+\bar{v}_{2}^{\nu}+\nu^{\alpha-1/2}\bar{v}_{3}^{\nu}\,,\quad \tilde{u}^{\nu}(0)=u_{0}\,.
\end{equation}
The decomposition of $\tilde{v}^{\nu}$ makes the problem easier. The two \sde{}s \eqref{e:v1}~and~\eqref{e:v3} for the two components~$\bar{v}_{1}^{\nu}$ and~$\bar{v}^{\nu}_{3}$ are just linear \sde{}s whose properties are well known. The properties of~$\bar{v}_{2}^{\nu}$  can be derived straightforwardly from the \pde~\eqref{e:v2} by the estimates in Theorem~\ref{thm:existence}.  We state the following results.
\begin{lemma}\label{lem:v1v3}
Assume that the boundedness~(\ref{Q}) and Assumption~\ref{H} hold. Let  $u_{1}\in L^{2}(D)$,  then for any $\phi\in C^{2}([0, T]\times D)$ with $\phi|_{\Gamma}=0$\,,
\begin{eqnarray} && \label{e:v1conv}
\frac{1}{\nu}\int_{0}^{t}\langle \bar{v}_{1}^{\nu}(s), \phi(s)\rangle\,ds\rightarrow 0\,,\quad 0\leq t\leq T\,,
\\ \text{and}&&
\label{e:v3conv}
\nu^{-1/2}\int_{0}^{t}\langle \bar{v}_{3}^{\nu}(s),\phi(s)\rangle\,ds\rightarrow \int_{0}^{t}\langle \phi(s), dW(s)\rangle, \quad 0\leq t\leq T\,,
\end{eqnarray}
in $L^{2}(\Omega)$, as $\nu\rightarrow 0$\,.
\end{lemma}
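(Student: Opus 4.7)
The plan is to exploit the fact that both $\bar v_1^\nu$ and $\bar v_3^\nu$ solve linear equations which can be integrated explicitly, so the claims reduce to deterministic/It\^o-isometry estimates rather than any PDE machinery.

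For \eqref{e:v1conv}, I would just solve the ODE \eqref{e:v1}: $\bar v_1^\nu(t)=\nu u_1 e^{-t/\nu}$. Then
\begin{equation*}
\frac{1}{\nu}\int_0^t\langle\bar v_1^\nu(s),\phi(s)\rangle\,ds=\int_0^t\langle u_1,\phi(s)\rangle e^{-s/\nu}\,ds,
\end{equation*}
which is bounded in absolute value by $\|u_1\|_0\sup_{s\in[0,T]}\|\phi(s)\|_0\cdot\nu(1-e^{-t/\nu})=\mathcal{O}(\nu)$. Since this quantity is deterministic, $L^2(\Omega)$-convergence is immediate.

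For \eqref{e:v3conv}, the mild formulation of the Ornstein--Uhlenbeck-type equation \eqref{e:v3} gives
\begin{equation*}
\bar v_3^\nu(t)=\frac{1}{\sqrt\nu}\int_0^t e^{-(t-r)/\nu}\,dW(r).
\end{equation*}
Multiplying by $\nu^{-1/2}$, pairing with $\phi(s)$, integrating in $s$, and applying a stochastic Fubini theorem (justified by $\operatorname{tr} Q<\infty$ and boundedness of $\phi$), I would rewrite
\begin{equation*}
\nu^{-1/2}\int_0^t\langle\bar v_3^\nu(s),\phi(s)\rangle\,ds=\int_0^t\langle g_\nu(r),dW(r)\rangle,\quad g_\nu(r):=\frac{1}{\nu}\int_r^t e^{-(s-r)/\nu}\phi(s)\,ds.
\end{equation*}
The substitution $u=(s-r)/\nu$ converts $g_\nu(r)$ into $\int_0^{(t-r)/\nu}e^{-u}\phi(r+\nu u)\,du$, which should be compared with $\phi(r)=\int_0^\infty e^{-u}\phi(r)\,du$. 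Splitting the error into an interior Taylor contribution and a boundary-layer tail yields
\begin{equation*}
\|g_\nu(r)-\phi(r)\|_0\leq C\nu\,\|\phi\|_{C^1}+\|\phi\|_\infty e^{-(t-r)/\nu}.
\end{equation*}

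The conclusion then follows from the It\^o isometry for $Q$-Wiener processes:
\begin{equation*}
\EX\Bigl|\int_0^t\langle g_\nu(r)-\phi(r),dW(r)\rangle\Bigr|^2\leq\operatorname{tr}Q\int_0^t\|g_\nu(r)-\phi(r)\|_0^2\,dr,
\end{equation*}
and from $\int_0^t e^{-2(t-r)/\nu}\,dr=\mathcal{O}(\nu)$, so the right-hand side vanishes as $\nu\to0$. I expect the main technical obstacle to be the boundary layer of $g_\nu$ near $r=t$, where pointwise convergence fails; but it remains harmless in the integrated $L^2$-sense thanks to the exponential decay factor, and the stochastic Fubini step is secured by the trace-class assumption~\eqref{Q}.
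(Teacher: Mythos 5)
Your proposal is correct, and for the convergence~\eqref{e:v1conv} it coincides with the paper's argument (explicit solution $\bar v_1^\nu(t)=\nu u_1e^{-t/\nu}$ and a direct deterministic bound). For the convergence~\eqref{e:v3conv}, however, you take a genuinely different route. The paper never writes down the mild (Ornstein--Uhlenbeck) solution: it simply rearranges equation~\eqref{e:v3} as $\nu^{-1/2}\bar v_3^\nu=-\sqrt{\nu}\,\bar v_{3,t}^\nu+\dot W$, integrates against $\phi$, integrates by parts in time to move the derivative off $\bar v_3^\nu$, and then controls the two leftover terms $\sqrt{\nu}\langle\bar v_3^\nu(t),\phi(t)\rangle$ and $\sqrt{\nu}\int_0^t\langle\bar v_3^\nu,\phi_t\rangle\,ds$ by the uniform moment bound $\mathbb{E}\|\bar v_3^\nu(t)\|_0^2\le\operatorname{tr}Q$, obtained from It\^o's formula and Gronwall. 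You instead solve \eqref{e:v3} explicitly, apply a stochastic Fubini theorem to rewrite the quantity as $\int_0^t\langle g_\nu(r),dW(r)\rangle$ with $g_\nu(r)=\frac1\nu\int_r^te^{-(s-r)/\nu}\phi(s)\,ds$, and compare $g_\nu$ with $\phi$ via the It\^o isometry. Both arguments are valid and both deliver the same $\mathcal{O}(\sqrt{\nu})$ remainder in $L^2(\Omega)$ (which is what feeds the $\mathcal{O}(\nu^{\alpha+1/2})$ term in \eqref{e:expansion}). Your version is more computational and requires justifying the Fubini interchange (harmless here given $\operatorname{tr}Q<\infty$ and boundedness of $\phi$), but in exchange it isolates explicitly where the error lives --- an interior $\mathcal{O}(\nu)$ Taylor term plus an exponentially small boundary layer near $r=t$ --- whereas the paper's rearrangement is shorter, avoids the explicit solution entirely, and transfers more easily to situations where the linear equation cannot be solved in closed form.
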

 \begin{proof}
The proof is direct.  First,
\begin{equation*}
\bar{v}_{1}^{\nu}(t)=\nu u_{1} e^{-{t}/{\nu}}.
\end{equation*}
Then for $\phi\in C^{2}([0, T]\times D)$
\begin{eqnarray*}
\frac{1}{\nu}\int_{0}^{t}\langle\bar{v}_{1}^{\nu}(s), \phi(s)\rangle\,ds&=&\int_{0}^{t}\langle u_{1}, \phi(s)\rangle e^{-s/\nu}\,ds\\
&=&\nu\int_{0}^{t/\nu}\langle  u_{1}, \phi(\nu \tau)\rangle e^{-\tau}\,d\tau\rightarrow 0 \,,\quad\text{as } \nu\rightarrow 0\,,
\end{eqnarray*}
 uniformly on $[0, T]$\,, which yields the first convergence. Second, for any $\phi\in C^{2}([0, T]\times D)$ with $\phi|_{\Gamma}=0$ by equation (\ref{e:v3}),
\begin{eqnarray}
\nu^{-1/2}\int_{0}^{t}\langle\bar{v}^{\nu}_{3}(s),\phi(s)\rangle\,ds&=&-\sqrt{\nu}\int_{0}^{t}\langle\bar{v}_{3,t}^{\nu}(s),\phi(s)\rangle\,ds+\int_{0}^{t}\langle \phi(s),  dW(s)\rangle\nonumber\\
&=&-\sqrt{\nu}\langle \bar{v}_{3}^{\nu}(t), \phi(t)\rangle+\sqrt{\nu}\int_{0}^{t}\langle \bar{v}_{3}^{\nu}(s), \phi_{t}(s)\rangle\,ds\nonumber\\
&&{}+\int_{0}^{t}\langle \phi(s), dW(s)\rangle\,.\label{e:v3appr}
\end{eqnarray}
 So it remains to show that $\bar{v}_{3}^{\nu}(t)$ is uniformly  bounded in the space~$L^{2}(\Omega, L^{2}(D))$.  By equation~(\ref{e:v3}),  applying It\^o formula to~$\|\bar{v}_{3}^{\nu}\|_{0}^{2}$ gives
 \begin{equation*}
\frac12\frac{d}{dt}\|\bar{v}_{3}^{\nu}(t)\|_{0}^{2}=-\frac{1}{\nu}\|\bar{v}_{3}^{\nu}\|^{2}_{0}+\frac{1}{2\nu}\operatorname{tr}Q+\frac{1}{\sqrt{\nu}}\langle \bar{v}_{3}^{\nu}, \dot{W} \rangle.
 \end{equation*}
 Then by the Gronwall lemma
 \begin{equation*}
\mathbb{E}\|\bar{v}_{3}^{\nu}(t)\|^{2}_{0}\leq \operatorname{tr}Q\,, \quad t\geq 0\,.
 \end{equation*}
 The proof is complete.
 \end{proof}

\begin{lemma}\label{lem:v2}
Assume the conditions in Theorem~\ref{thm:existence} holds, then there is a parameter~$\nu$ and independent positive constant~$C_{T}$ such that
\begin{equation*}
\mathbb{E}\|\bar{v}^{\nu}_{2}(t)\|_{-1}\leq C_{T}\,,\quad 0\leq t\leq T\,.
\end{equation*}
\end{lemma}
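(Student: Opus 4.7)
The plan is to treat equation~\eqref{e:v2} as a scalar linear equation in~$\bar v_{2}^{\nu}$ with forcing $\Delta u^{\nu}+f(u^{\nu})$, and to exploit the smoothing provided by the exponential kernel. Variation of parameters gives
\begin{equation*}
\bar v_{2}^{\nu}(t)=\frac{1}{\nu}\int_{0}^{t}e^{-(t-s)/\nu}\bigl[\Delta u^{\nu}(s)+f(u^{\nu}(s))\bigr]\,ds\,.
\end{equation*}
Since $\nu^{-1}e^{-(t-s)/\nu}$ integrates to $1-e^{-t/\nu}\leq 1$ on $[0,t]$, the problem reduces to bounding the $H^{-1}$-norm of the forcing pointwise in~$s$ and then pulling the supremum out of the convolution.

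For the linear part, the spectral definition of the norms yields $\|\Delta u^{\nu}\|_{-1}=\|u^{\nu}\|_{1}$, which is already controlled by Theorem~\ref{thm:existence}. For the nonlinear part, Assumption~\ref{H}(1) together with the Sobolev embedding $H_{0}^{1}(D)\hookrightarrow L^{6}(D)$ (valid in spatial dimension $n\leq 3$) gives $\|f(u^{\nu})\|_{0}\leq C(1+\|u^{\nu}\|_{1}^{3})$; since $D$ is bounded, $L^{2}(D)\hookrightarrow H^{-1}(D)$ and hence $\|f(u^{\nu})\|_{-1}\leq C(1+\|u^{\nu}\|_{1}^{3})$. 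Combining the two estimates,
\begin{equation*}
\|\bar v_{2}^{\nu}(t)\|_{-1}\leq C\Bigl(1+\sup_{0\leq s\leq T}\|u^{\nu}(s)\|_{1}+\sup_{0\leq s\leq T}\|u^{\nu}(s)\|_{1}^{3}\Bigr)\,.
\end{equation*}

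The lemma then follows by taking expectations, provided one has a $\nu$-independent bound on $\EX\sup_{0\leq t\leq T}\|u^{\nu}(t)\|_{1}^{3}$. This higher-moment control is the main obstacle, since the estimate stated in Theorem~\ref{thm:existence} only bears on the first moment of $\|u^{\nu}\|_1$. The natural remedy is to apply It\^o's formula to a power of the energy $E(t)=\tfrac12\|v^{\nu}\|_{0}^{2}+\tfrac12\|u^{\nu}\|_{1}^{2}-\int_{D}F(u^{\nu})\,dx$ and exploit the dissipativity built into Assumption~\ref{H}(2)---namely $-\int_{D}F(u)\,dx\geq C(\|u\|_{L^{4}}^{4}-1)$ together with $sf(s)\leq -C_{4}(F(s)-1)$---to absorb the cubic nonlinearity and produce $\nu$-independent $p$th-moment bounds for any $p\geq 1$. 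Such estimates are already implicit in the tightness arguments of~\cite{CF06,LW08} invoked in Theorem~\ref{thm:existence}, so granting them the conclusion $\EX\|\bar v_{2}^{\nu}(t)\|_{-1}\leq C_{T}$ is immediate.
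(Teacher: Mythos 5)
Your proposal follows essentially the same route as the paper's own proof: variation of parameters for the linear equation~\eqref{e:v2}, the unit mass of the kernel $\nu^{-1}e^{-(t-s)/\nu}$, and the bound on the forcing in $H^{-1}$ via $\|\Delta u^{\nu}\|_{-1}=\|u^{\nu}\|_{1}$ and the embedding $H_{0}^{1}(D)\subset L^{6}(D)$ for the nonlinearity. The one point where you go beyond the paper is in explicitly flagging that the cubic growth of~$f$ demands a $\nu$-uniform bound on $\mathbb{E}\sup_{0\leq t\leq T}\|u^{\nu}(t)\|_{1}^{3}$ rather than just the first-moment estimate stated in Theorem~\ref{thm:existence}; the paper's proof glosses over this by simply citing ``the estimates in Theorem~\ref{thm:existence},'' so your observation is a fair and genuinely necessary refinement rather than a deviation in method.
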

\begin{proof}
For any $\psi\in H_{0}^{1}(D)$\,, from equation~(\ref{e:v2})
\begin{equation*}
\frac{d}{dt}\langle \bar{v}^{\nu}_{2}, \psi \rangle=-\frac{1}{\nu}\langle \bar{v}^{\nu}_{2}, \psi  \rangle-\frac{1}{\nu}\langle \nabla u^{\nu}, \nabla\psi\rangle+\frac{1}{\nu}\langle f(u^{\nu}), \psi\rangle.
\end{equation*}
Then
\begin{equation*}
\langle \bar{v}_{2}^{\nu}(t), \psi\rangle=
\frac{1}{\nu}e^{-{t}/{\nu}}\int_{0}^{t}e^{{s}/{\nu}}\left[-\langle \nabla u^{\nu}(s), \nabla\psi\rangle+\langle f(u^{\nu}(s)), \psi \rangle \right]\,ds\,.
\end{equation*}
By the estimates in Theorem~\ref{thm:existence} and the embedding $H_{0}^{1}(D)\subset L^{6}(D)$ for $1\leq n\leq 3$\,,
\begin{equation*}
\mathbb{E}|\langle \bar{v}^{\nu}_{2}(t), \psi\rangle|\leq C_{T}\|\psi\|_{1}\,,\quad 0\leq t\leq T\,.
\end{equation*}
The proof is complete.

\end{proof}

From the above lemma we prove the main Theorem~\ref{thm:main}.  First, for any $\kappa>0$\,, by the tightness of displacement~$u^{\nu}$ in the space~$C(0, T; L^{2}(D))$, there is a compact set $B_{\kappa}\subset C(0, T; L^{2}(D))$ such that
\begin{equation}\label{e:event1}
\mathbb{P}\{ u^{\nu}\in B_{\kappa}\}\geq 1-\kappa/2\,.
\end{equation}
By the Markov inequality and the estimate in Lemma~\ref{lem:v2}, for any $\kappa>0$ there is a positive constant $C_{T}^{\kappa}$ such that
\begin{equation}\label{e:event2}
\mathbb{P}\left\{\|\bar{v}^{\nu}_{2}(t)\|_{-1}\leq C_{T}^{\kappa}\|\right\}\geq 1-\kappa/2\,.
\end{equation}
Then for any $\kappa>0$\,, define a probability space~$(\Omega_{\kappa}, \mathcal{F}_{\kappa}, \mathbb{P}_{\kappa})$
\begin{equation*}
\Omega_{\kappa}=\{\omega\in\Omega: \text{events~(\ref{e:event1}) and~(\ref{e:event2}) hold}\}, \quad  \mathcal{F}_{\kappa}=\{F\cap \Omega_{\kappa}: F\in\mathcal{F} \},
\end{equation*}
and for any $F\in\mathcal{F}_{\kappa}$
\begin{equation*}
\mathbb{P}_{\kappa}(F)=\frac{\mathbb{P}(F\cap\Omega_{\kappa})}{\mathbb{P}(\Omega_{\kappa})}\,.
\end{equation*}
In the following we restrict our problem to the above new probability space.  For any $\omega\in\Omega_{\kappa}$\,, the convergence~(\ref{e:v1conv}) still holds. The convergence~(\ref{e:v3conv}) is in the $L^{2}(\Omega)$~sense which yields the convergence for $\mathbb{P}$~almost all $\omega\in\Omega$\,, then we also have the  convergence~(\ref{e:v3conv}) for~$\mathbb{P}_{\kappa}$ almost all $\omega\in\Omega_{\kappa}$\,. So we can assume that for all $\omega\in\Omega_{\kappa}$\,, the convergence~(\ref{e:v3conv}) holds.

Furthermore, we establish the limit
\begin{equation}\label{e:f-conv}
f(u^{n})\rightarrow f(u)\ \text{ weakly in }  L^{2}(0, T; L^{2}(D))
\end{equation}
for any $u^{n}\rightarrow u$ in $C(0, T; L^{2}(D))$\,.  By the embedding $H^{1}_{0}(D)\subset L^{6}(D)$  we have $\|f(u^{n})\|_{L^{2}(0, T; L^{2}D)}\leq C_{T}$ for some positive constant $C_{T}$\,, and by the strong convergence of $u^{n}\rightarrow u$\,, $f(u^{n}(t,x))\rightarrow f(u(t,x))$ on $[0, T]\times D$ almost everywhere.  Then Lemma~\ref{lem:convergence} gives the limit.

Next we give an asymptotic approximation  to the displacement~$u^{\nu}$. For this we consider $\langle u^{\nu}(t), \phi(t)\rangle$ with $\phi\in C^{2}([0, T]\times D)$ and $\phi|_{\Gamma}=0$\,.  From equation~(\ref{e:tilde-u})
\begin{eqnarray*}
&&\langle u^{\nu}(t), \phi(t)\rangle-\langle u_{0}, \phi(0)\rangle-\int_{0}^{t}\langle u^{\nu}(s), \phi_{t}(s)\rangle\,ds\\
 &=&\frac{1}{\nu}\int_{0}^{t}\langle \bar{v}_{1}^{\nu}(s), \phi(s)\rangle\,ds+\int_{0}^{t}\langle\bar{v}_{2}^{\nu}(s), \phi(s)\rangle\,ds +\nu^{\alpha-1/2}\int_{0}^{t}\langle \bar{v}_{3}^{\nu}(s), \phi(s)\rangle\,ds\,.
 \end{eqnarray*}
From  equation~(\ref{e:v2})
\begin{eqnarray*}
\int_{0}^{t}\langle\bar{v}^{\nu}_{2}(s), \phi(s) \rangle\,ds&=&\int_{0}^{t}\langle u^{\nu}(s), \Delta \phi(s)\rangle\,ds+\int_{0}^{t}\langle f(u^{\nu}(s)), \phi(s)\rangle\,ds\\&&{}-\nu\langle \bar{v}_{2}^{\nu}(t), \phi(t)\rangle+\nu\int_{0}^{t}\langle \bar{v}_{2}^{\nu}(s), \phi_{t}(s)\rangle\,ds\,.
\end{eqnarray*}
By the definition of $\Omega_{\kappa}$, and Lemmas~\ref{lem:v1v3} and~\ref{lem:v2},
\begin{equation*}
\frac{1}{\nu}\int_{0}^{t}\langle\bar{v}^{\nu}_{1}(s), \phi(s) \rangle\,ds=\mathcal{O}(\nu)\,,\quad \nu\langle\bar{v}^{\nu}_{2}(t), \phi(t) \rangle=\mathcal{O}(\nu),
\end{equation*}
and
\begin{equation*}
\nu\int_{0}^{t}\langle \bar{v}_{2}^{\nu}(s), \phi_{t}(s)\rangle\,ds=\mathcal{O}(\nu)\,.
\end{equation*}
Further, by~(\ref{e:v3appr})
\begin{equation*}
\nu^{\alpha-1/2}\int_{0}^{t}\langle\bar{v}^{\nu}_{3}(s), \phi(s) \rangle\,ds=\nu^{\alpha}\int_{0}^{t}\langle \phi(s), dW(s) \rangle+\mathcal{O}(\nu^{\alpha+1/2})\,.
\end{equation*}
Then
\begin{eqnarray}
&&\langle u^{\nu}(t), \phi(t)\rangle-\langle u_{0}, \phi(0)\rangle-\int_{0}^{t}\langle u^{\nu}(s), \phi_{t}(s)\rangle\,ds-\int_{0}^{t}\langle u^{\nu}(s),\Delta\phi(s) \rangle\,ds\nonumber\\&&{}-\int_{0}^{t}\langle f(u^{\nu}(s)), \phi(s)\rangle\,ds\nonumber\\
 &=&\nu^{\alpha-1/2}\int_{0}^{t}\langle \bar{v}_{3}^{\nu}(s), \phi(s)\rangle\,ds+\frac{1}{\nu}\int_{0}^{t}\langle \bar{v}_{1}^{\nu}(s), \phi(s)\rangle\,ds \nonumber\\&&{}-\nu\langle \bar{v}_{2}^{\nu}(t), \phi(t)\rangle+\nu\int_{0}^{t}\langle \bar{v}_{2}^{\nu}(s), \phi_{t}(s)\rangle\,ds \nonumber\\
 &=&\nu^{\alpha}\int_{0}^{t}\langle \phi(s), dW(s)\rangle+\mathcal{O}(\nu^{\alpha+1/2})+\mathcal{O}(\nu)\,. \label{e:expansion}
 \end{eqnarray}
 Now for exponent $0\leq \alpha<1$\,,
noticing the convergence (\ref{e:f-conv}) and neglecting the~$o(\nu^{\alpha})$ terms we have the following equation:
\begin{eqnarray*}
&&\langle \bar{u}^{\nu}(t), \phi(t)\rangle-\langle u_{0}, \phi(0)\rangle-\int_{0}^{t}\langle \bar{u}^{\nu}(s), \phi_{t}(s)\rangle\,ds-\int_{0}^{t}\langle \bar{u}^{\nu}(s),\Delta\phi(s) \rangle\,ds\\&&{}-\int_{0}^{t}\langle f(\bar{u}^{\nu}(s)), \phi(s)\rangle\,ds=\nu^{\alpha}\int_{0}^{t}\langle \phi(s), dW(s)\rangle\,.
\end{eqnarray*}
Then we deduce the following approximation equation holds:
\begin{equation}\label{e:SHE}
\bar{u}^{\nu}_{t}=\D\bar{u}^{\nu}+f(\bar{u}^{\nu})+\nu^{\alpha}\dot{W}\,,\quad \bar{u}^{\nu}(0)=u_{0}\,.
\end{equation}

For exponent $\alpha>1$  we need to show the rate of decay in~$\nu$ of  $v_{1}^{\nu}$~and~$v_{2}^{\nu}$ as $\nu\rightarrow 0$\,. First, as $\{u^{\nu}\}_{\nu}\subset B_{\kappa}$ which is compact in $C(0, T; L^{2}(D))$\,, for any sequence, there are a subsequence, say $u^{\nu_{n}}$\,, with $\nu_{n}\rightarrow 0$ as $n\rightarrow\infty$\,,   and $\bar{u}\in C(0, T; L^{2}(D))$ with $\bar{u}(0)=u_{0}$ such that
\begin{equation*}
u^{\nu_{n}}\rightarrow \bar{u}\,, \quad n\rightarrow \infty\,.
\end{equation*}

First we assume
\begin{equation*}
\Delta\bar{u}+f(\bar{u})\neq 0 \quad \text{in } H^{-1}
\quad \text{for all }t\in [0, T].
\end{equation*}
Then by equation~(\ref{e:v2}), for any $\phi\in C^{2}([0, T]\times D)$ with $\phi|_{\Gamma}=0$
\begin{eqnarray*}
&&\nu_{n}\langle\bar{v}_{2}^{\nu_{n}}(t), \phi(t) \rangle-\nu_{n}\int_{0}^{t}\langle \bar{v}_{2}^{\nu_{n}}(s), \phi_{t}(s)\rangle\,ds\\
&=& -\int_{0}^{t}\langle\bar{v}_{2}^{\nu_{n}}(s), \phi(s) \rangle\,ds+\int_{0}^{t}\langle u^{\nu_{n}}(s), \D\phi(s)\rangle\,ds+
\int_{0}^{t}\langle f(u^{\nu_{n}}(s)), \phi(s) \rangle\,ds\,.
\end{eqnarray*}
By the estimates in Lemma~\ref{lem:v2}, and by the convergence~(\ref{e:f-conv}),
\begin{equation*}
\int_{0}^{t}\langle
\bar{v}_{2}^{\nu_{n}}(s), \phi(s)\rangle\,ds\rightarrow \int_{0}^{t}\langle \bar{u}(s), \D\phi(s) \rangle\,ds+\int_{0}^{t}\langle f(\bar{u}(s)), \phi(s)\rangle\,ds\neq 0
\end{equation*}
as $n\rightarrow\infty$\,. This shows that
$\bar{v}_{2}^{\nu}(t)$ does not converge to zero as $\nu\rightarrow 0$ for $0<t\leq T$\,. Then
\begin{equation}
\nu\langle \bar{v}_{2}^{\nu}(t), \phi(t)\rangle=\mathcal{O}(\nu)\quad \text{and} \quad
\nu\int_{0}^{t}\langle\bar{v}_{2}^{\nu}(s), \phi_{t}(s) \rangle\,ds=\mathcal{O}(\nu)
\end{equation}
instead of $o(\nu)$ as $\nu\rightarrow 0$\,.

Further  from equation~(\ref{e:v1})\,, for any $\phi\in C^{2}([0, T]\times D)$ with $\phi|_{\Gamma}=0$\,,
\begin{eqnarray*}
\frac{1}{\nu}\langle\bar{v}_{1}^{\nu}(t), \phi(t) \rangle-\langle u_{1}, \phi(0)\rangle-\frac{1}{\nu}\int_{0}^{t}\langle\bar{v}_{1}^{\nu}(s),\phi_{t}(s) \rangle\,ds=-\frac{1}{\nu^{2}}\int_{0}^{t}\langle \bar{v}_{1}^{\nu}(s), \phi(s)\rangle\,ds\,.
\end{eqnarray*}
Then
\begin{equation}
\frac{1}{\nu^{2}}\int_{0}^{t}\langle\bar{v}_{1}^{\nu}(s), \phi(s) \rangle\,ds\rightarrow \langle u_{1}, \phi(0) \rangle\,,\quad \nu\rightarrow 0\,.
\end{equation}
Then for exponent $\alpha>1$\,, in  the asymptotic expansion~(\ref{e:expansion}) for small~$\nu$, neglecting the $o(\nu)$~term consisting of~$\bar{v}_{3}^{\nu}$, and by the  transformation~(\ref{e:transf}) and decomposition~(\ref{e:split}),
\begin{eqnarray*}
&&\langle u^{\nu}(t), \phi(t)\rangle-\langle u_{0}, \phi(0)\rangle-\int_{0}^{t}\langle u^{\nu}(s), \phi_{t}(s)\rangle\,ds-\int_{0}^{t}\langle u^{\nu}(s),\Delta\phi(s) \rangle\,ds\\&&{}-\int_{0}^{t}\langle f(u^{\nu}(s)), \phi(s)\rangle\,ds\nonumber\\
 &=&\frac{1}{\nu}\int_{0}^{t}\langle \bar{v}_{1}^{\nu}(s), \phi(s)\rangle\,ds -\nu\langle \bar{v}_{2}^{\nu}(t), \phi(t)\rangle+\nu\int_{0}^{t}\langle \bar{v}_{2}^{\nu}(s), \phi_{t}(s)\rangle\,ds \\
 &=&-\nu\langle v^{\nu}(t), \phi(t)\rangle+\nu\int_{0}^{t}\langle v^{\nu}(s), \phi_{t}(s) \rangle\,ds
 +\nu\langle u_{1}, \phi(0)\rangle
 \\&&{}
 +\nu^{\alpha+1/2}\langle \bar{v}^{\nu}_{3}(t), \phi(t)\rangle
 -\nu^{\alpha+1/2}\int_{0}^{t}\langle \bar{v}^{\nu}_{3}(s), \phi_{t}(s)\rangle\,ds\,.
 \end{eqnarray*}
Then noticing that velocity~$v^{\nu}=u^{\nu}_{t}$\,, and neglecting the $\mathcal{O}(\nu^{\alpha+1/2})$~terms,  we have the following approximation equation
\begin{equation}\label{e:WEn}
\nu \bar{u}^{\nu}_{tt}+\bar{u}^{\nu}_{t}=\D \bar{u}^{\nu}+f(\bar{u}^{\nu})\,.
\end{equation}
Second if for $t\in [0, T]$
\begin{equation*}
\Delta\bar{u}+f(\bar{u})=0\,, \quad \bar{u}(0)=u_{0}
\end{equation*}
in $H^{-1}$.  Then $\bar{u}$~is a stationary solution of~(\ref{e:WEn})\,.

The above approximation is in the sense of $\mathbb{P}_{\kappa}$~almost surely. Then by the arbitrary choice of~$\kappa$, and the well-posedness of  (\ref{e:SHE}) and (\ref{e:WEn}),
this establishes the approximation with~$\mathbb{P}$~probability one.

This completes our proof of the main Theorem~\ref{thm:main} on the approximations of the stochastic wave equation~\eqref{e:SWE1}--\eqref{e:SWE3} for different scaling of the noise process.


\paragraph{Acknowledgements} This research was supported by the
NSFC grant No.~10901083 and by the Australian Research Council grants DP0774311 and DP0988738.

\end{document}